\newtheorem{theorem}{Theorem}
\theoremstyle{plain}
\newtheorem{definition}{Definition}
\newtheorem{lemma}{Lemma}
\newtheorem{remark}{Remark}
\numberwithin{equation}{section}
\begin{document}
\title[Simpson Type Inequalities via $\varphi -$Convexity]{Simpson Type
Inequalities via $\varphi -$Convexity}
\author{M.Emin \"{O}zdemir$^{\blacklozenge }$}
\address{$^{\blacklozenge }$Atat\"{u}rk University, K.K. Education Faculty,
Department of Mathematics, Erzurum 25240, Turkey}
\email{emos@atauni.edu.tr}
\author{Merve Avci$^{\bigstar \diamondsuit }$}
\address{$^{\bigstar }$Adiyaman University, Faculty of Science and Arts,
Department of Mathematics, Adiyaman 02040, Turkey}
\email{mavci@posta.adiyaman.edu.tr}
\thanks{$^{\diamondsuit }$Corresponding Author}
\author{A. Ocak Akdemir$^{\clubsuit }$}
\address{$^{\clubsuit }$A\u{g}ri \.{I}brahim \c{C}e\c{c}en University,
Faculty of Science and Arts, Department of Mathematics, A\u{g}ri 04100,
Turkey}
\subjclass{26D10, 26D15}
\keywords{Simpson inequality, $\varphi -$convex function, h\"{o}lder
inequality, power-mean inequality}

\begin{abstract}
In this paper, we obtain some Simpson type inequalities for functions whose
derivatives in absolute value are $\varphi -$convex.
\end{abstract}

\maketitle

\section{introduction and preliminaries}

Suppose $f:[a,b]\rightarrow 
%TCIMACRO{\U{211d} }%
%BeginExpansion
\mathbb{R}
%EndExpansion
$ is a four times continuously differentiable mapping on $(a,b)$ and $%
\left\Vert f^{(4)}\right\Vert _{\infty }=\sup \left\vert
f^{(4)}(x)\right\vert <\infty .$ The following inequality%
\begin{eqnarray*}
&&\left\vert \frac{1}{3}\left[ \frac{f(a)+f(b)}{2}+2f\left( \frac{a+b}{2}%
\right) \right] -\frac{1}{b-a}\int_{a}^{b}f(x)dx\right\vert \\
&\leq &\frac{1}{2880}\left\Vert f^{(4)}\right\Vert _{\infty }\left(
b-a\right) ^{4}
\end{eqnarray*}%
is well known in the literature as Simpson's inequality.

For some results about Simpson inequality see \cite{ADS}-\cite{D}.

In \cite{ADS}, Alomari et al. proved some inequalities of Simpson type for $%
s-$convex functions by using the following Lemma.

\begin{lemma}
\label{lem 1.1} Let $f:I\subset 
%TCIMACRO{\U{211d} }%
%BeginExpansion
\mathbb{R}
%EndExpansion
\rightarrow 
%TCIMACRO{\U{211d} }%
%BeginExpansion
\mathbb{R}
%EndExpansion
$ be an absolutely continuous mapping on $I^{\circ }$ where $a,b\in I$ with $%
a<b.$ Then the following equality holds:%
\begin{eqnarray*}
&&\left\vert \frac{1}{6}\left[ f(a)+4f\left( \frac{a+b}{2}\right) +f(b)%
\right] -\frac{1}{b-a}\int_{a}^{b}f(x)dx\right\vert \\
&=&\left( b-a\right) \int_{0}^{1}p(t)f^{\prime }(tb+(1-t)a)dt,
\end{eqnarray*}%
where%
\begin{equation*}
p(t)=\left\{ 
\begin{array}{c}
t-\frac{1}{6},\text{ \ \ \ \ }t\in \left[ 0,\frac{1}{2}\right) \\ 
\\ 
t-\frac{5}{6},\text{ \ \ \ \ }t\in \left[ \frac{1}{2},1\right] .%
\end{array}%
\right.
\end{equation*}
\end{lemma}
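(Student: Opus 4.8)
The plan is to work backwards from the right-hand side. First I would expand the piecewise kernel and split the integral at the breakpoint $t=\tfrac12$, writing $\int_{0}^{1}p(t)f'(tb+(1-t)a)\,dt$ as $\int_{0}^{1/2}\big(t-\tfrac16\big)f'(tb+(1-t)a)\,dt+\int_{1/2}^{1}\big(t-\tfrac56\big)f'(tb+(1-t)a)\,dt$, and then integrate each piece by parts. The antidifferentiation uses $\frac{d}{dt}f(tb+(1-t)a)=(b-a)f'(tb+(1-t)a)$; absolute continuity of $f$ on $I^{\circ}$ is precisely what legitimizes this integration by parts (the fundamental theorem of calculus for absolutely continuous functions), and it is also what makes $f$ continuous at $\tfrac{a+b}{2}$ so the midpoint boundary contributions combine cleanly.

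Carrying this out, the first piece yields the boundary term $\frac{1}{b-a}\big[(t-\tfrac16)f(tb+(1-t)a)\big]_{0}^{1/2}=\frac{1}{b-a}\big[\tfrac13 f(\tfrac{a+b}{2})+\tfrac16 f(a)\big]$ minus $\frac{1}{b-a}\int_{0}^{1/2}f(tb+(1-t)a)\,dt$; the second piece yields the boundary term $\frac{1}{b-a}\big[\tfrac16 f(b)+\tfrac13 f(\tfrac{a+b}{2})\big]$ minus $\frac{1}{b-a}\int_{1/2}^{1}f(tb+(1-t)a)\,dt$ (note the sign at $t=\tfrac12$, where the coefficient is $\tfrac12-\tfrac56=-\tfrac13$). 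Adding the two boundary terms collapses to $\frac{1}{6(b-a)}\big[f(a)+4f(\tfrac{a+b}{2})+f(b)\big]$, while the two leftover integrals merge into $-\frac{1}{b-a}\int_{0}^{1}f(tb+(1-t)a)\,dt$.

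Finally I would apply the substitution $x=tb+(1-t)a$, $dx=(b-a)\,dt$, so that $\int_{0}^{1}f(tb+(1-t)a)\,dt=\frac{1}{b-a}\int_{a}^{b}f(x)\,dx$ and the merged integral equals $-\frac{1}{(b-a)^{2}}\int_{a}^{b}f(x)\,dx$. Multiplying the computed expression for $\int_{0}^{1}p(t)f'(tb+(1-t)a)\,dt$ by $(b-a)$ then gives exactly $\frac16\big[f(a)+4f(\tfrac{a+b}{2})+f(b)\big]-\frac{1}{b-a}\int_{a}^{b}f(x)\,dx$, and taking absolute values yields the stated equality. There is no genuine obstacle here; the only thing requiring care is the bookkeeping of the constants $\tfrac16$ and $\tfrac56$ at the points $0,\tfrac12,1$, and checking that the inner-endpoint values at $t=\tfrac12$ add (as $\tfrac13+\tfrac13$) rather than cancel, which is what produces the coefficient $4$ on $f(\tfrac{a+b}{2})$.
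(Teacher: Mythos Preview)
Your proof is correct and follows exactly the approach the paper uses. The paper itself does not prove Lemma~\ref{lem 1.1} (it is quoted from \cite{ADS}), but its proof of the $\varphi$-convex analogue, Lemma~\ref{lem 3.1}, proceeds by the same splitting at $t=\tfrac12$, integration by parts on each piece, and the substitution $x=a+te^{i\varphi}(b-a)$; your argument is the $\varphi=0$ specialization of that computation, with the same boundary bookkeeping.
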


Let $f,\varphi :K\rightarrow 
%TCIMACRO{\U{211d} }%
%BeginExpansion
\mathbb{R}
%EndExpansion
,$ where $K$ is a nonempty closed set in $%
%TCIMACRO{\U{211d} }%
%BeginExpansion
\mathbb{R}
%EndExpansion
^{n},$ be continuous functions. We recall the following results, which are
due to Noor and Noor \cite{NN}, Noor \cite{N} as follows:

\begin{definition}
\label{def 1} Let $u\in K.$ Then the set $K$ is said to be $\varphi -$convex
at $u$ with respect to $\varphi ,$ if 
\begin{equation*}
u+te^{i\varphi }(v-u)\in K,\text{ \ \ }\forall u,v\in K,\text{ \ \ }t\in
\lbrack 0,1].
\end{equation*}
\end{definition}

\begin{remark}
\label{rem 1} We would like to mention that the Definition \ref{def 1} of a $%
\varphi -$convex set has a clear geometric interpretation. This definition
essentially says that there is a path starting from a point $u$ which is
contained in $K.$ We don't require that the point $v$ should be one of the
end points of the path. This observation plays an important role in our
analysis . Note that, if we demand that $v$ should be an end point of the
path for every pair of points, $u,v\in K,$ then $e^{i\varphi }(v-u)=v-u$ if
and only if, $\varphi =0,$ and consequently $\varphi -$convexity reduces to
convexity. Thus, it is true that every convex set is also an $\varphi -$%
convex set, but the converse is not necessarily true.
\end{remark}

\begin{definition}
\label{def 2} The function $f$ on the $\varphi -$convex set $K$ is said to
be $\varphi -$convex with respect to $\varphi ,$ if 
\begin{equation*}
f(u+te^{i\varphi }(v-u))\leq (1-t)f(u)+tf(v),\text{ \ \ }\forall u,v\in K,%
\text{ \ \ }t\in \lbrack 0,1].
\end{equation*}%
The function $f$ is said to be $\varphi -$concave if and only if $-f$ is $%
\varphi -$convex. Note that every convex function is a $\varphi -$convex
function, but the converse is not true.
\end{definition}

The following inequality is known as the H\"{o}lder inequality\cite{1}:

\begin{theorem}
\label{teo 1.1} Let $p>1$ and $\frac{1}{p}+\frac{1}{q}=1.$ If $f$ and $g$
are real functions defined on $[a,b]$ and if $\left\vert f\right\vert ^{p}$
and $\left\vert g\right\vert ^{q}$ are integrable functions on $[a,b]$ then 
\begin{equation*}
\int_{a}^{b}\left\vert f(x)g(x)\right\vert dx\leq \left(
\int_{a}^{b}\left\vert f(x)\right\vert ^{p}dx\right) ^{\frac{1}{p}}\left(
\int_{a}^{b}\left\vert g(x)\right\vert ^{q}dx\right) ^{\frac{1}{q}},
\end{equation*}%
with equality holding if and only if $A\left\vert f(x)\right\vert
^{p}=B\left\vert g(x)\right\vert ^{q}$ almost everywhere, where $A$ and $B$
are constants.
\end{theorem}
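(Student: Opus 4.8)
The plan is to reduce the inequality to the pointwise \emph{Young inequality}
\[
\alpha\beta\leq\frac{\alpha^{p}}{p}+\frac{\beta^{q}}{q},\qquad\alpha,\beta\geq 0,
\]
by a normalization trick. First I would prove this auxiliary estimate. The quickest route uses concavity of the logarithm: for $\alpha,\beta>0$ write $\alpha\beta=\exp\!\left(\frac{1}{p}\ln\alpha^{p}+\frac{1}{q}\ln\beta^{q}\right)$, and since $\ln$ is concave and $\frac{1}{p}+\frac{1}{q}=1$,
\[
\frac{1}{p}\ln\alpha^{p}+\frac{1}{q}\ln\beta^{q}\leq\ln\!\left(\frac{\alpha^{p}}{p}+\frac{\beta^{q}}{q}\right);
\]
exponentiating yields the bound, and strict concavity shows equality holds precisely when $\alpha^{p}=\beta^{q}$ (the cases $\alpha=0$ or $\beta=0$ being immediate). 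Alternatively one can fix $\beta$ and minimise $\alpha\mapsto\frac{\alpha^{p}}{p}+\frac{\beta^{q}}{q}-\alpha\beta$ over $\alpha\geq 0$ by elementary calculus.

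Next I would dispose of the degenerate cases. If $\int_{a}^{b}|f(x)|^{p}\,dx=0$ then $f=0$ a.e., so both sides of the asserted inequality vanish; similarly if $\int_{a}^{b}|g(x)|^{q}\,dx=0$. If either integral is infinite the right-hand side is infinite and there is nothing to prove. Hence I may assume that
\[
F:=\left(\int_{a}^{b}|f(x)|^{p}\,dx\right)^{1/p}\quad\text{and}\quad G:=\left(\int_{a}^{b}|g(x)|^{q}\,dx\right)^{1/q}
\]
both lie in $(0,\infty)$.

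For the main step I apply Young's inequality pointwise with $\alpha=|f(x)|/F$ and $\beta=|g(x)|/G$, which gives, for every $x\in[a,b]$,
\[
\frac{|f(x)g(x)|}{FG}\leq\frac{1}{p}\cdot\frac{|f(x)|^{p}}{F^{p}}+\frac{1}{q}\cdot\frac{|g(x)|^{q}}{G^{q}}.
\]
Integrating over $[a,b]$ (the right-hand side is integrable by hypothesis, so the left-hand side is as well) and using $\int_{a}^{b}|f|^{p}=F^{p}$, $\int_{a}^{b}|g|^{q}=G^{q}$ leads to $\frac{1}{FG}\int_{a}^{b}|f(x)g(x)|\,dx\leq\frac{1}{p}+\frac{1}{q}=1$, which rearranges to the claimed inequality. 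Finally, for the equality clause: equality in the integrated inequality is equivalent to equality a.e.\ in the pointwise inequality, i.e.\ $\frac{|f(x)|^{p}}{F^{p}}=\frac{|g(x)|^{q}}{G^{q}}$ a.e., which is exactly the stated condition $A|f(x)|^{p}=B|g(x)|^{q}$ a.e.\ with $A=G^{q}$ and $B=F^{p}$ (or any positive constant multiples); conversely that condition turns every inequality above into an equality. I expect the only point needing genuine care to be the measure-theoretic bookkeeping — justifying integration of the pointwise bound and handling the degenerate integrals — since the analytic heart of the proof, Young's inequality, is a one-line consequence of concavity of $\ln$.
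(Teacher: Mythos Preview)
Your argument is correct and is the standard derivation of H\"{o}lder's inequality from Young's inequality via normalization, including the equality analysis. Note, however, that the paper does not supply its own proof of this theorem: it is quoted as a classical background result with a citation to \cite{1}, so there is no in-paper argument to compare your approach against.
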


\section{Simpson type inequalities for $\protect\varphi -$convex functions}

Throughout this section, let $K=[a,a+e^{i\varphi }(b-a)]$ and $0\leq \varphi
\leq \frac{\pi }{2}$

We used the following Lemma to obtain our main results.

\begin{lemma}
\label{lem 3.1} Let $K\subset 
%TCIMACRO{\U{211d} }%
%BeginExpansion
\mathbb{R}
%EndExpansion
$ be a $\varphi -$convex subset and $f:K\rightarrow (0,\infty )$ be a
differentiable function on $K^{\circ }($the interior of $K)$, $a,b\in K$
with $a<a+e^{i\varphi }(b-a).$ If $f^{\prime }$ is integrable on $%
[a,a+e^{i\varphi }(b-a)],$ following equality holds:%
\begin{eqnarray*}
&&\left\vert \frac{1}{6}\left[ f(a)+4f\left( \frac{2a+e^{i\varphi }(b-a)}{2}%
\right) +f(a+e^{i\varphi }(b-a))\right] -\frac{1}{e^{i\varphi }(b-a)}%
\int_{a}^{a+e^{i\varphi }(b-a)}f(x)dx\right\vert \\
&=&e^{i\varphi }(b-a)\int_{0}^{1}p(t)f^{\prime }(a+te^{i\varphi }(b-a))dt,
\end{eqnarray*}%
where%
\begin{equation*}
\ \ p(t)=\left\{ 
\begin{array}{c}
t-\frac{1}{6},\text{ \ \ \ \ }t\in \left[ 0,\frac{1}{2}\right) \\ 
\\ 
t-\frac{5}{6},\text{ \ \ \ \ }t\in \left[ \frac{1}{2},1\right] .%
\end{array}%
\right.
\end{equation*}
\end{lemma}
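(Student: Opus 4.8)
The plan is to reduce the stated identity to two integration-by-parts computations, one on each of the subintervals $\left[0,\frac12\right)$ and $\left[\frac12,1\right]$ on which $p$ is affine. Write
\[
I:=\int_{0}^{1}p(t)f^{\prime }(a+te^{i\varphi }(b-a))\,dt
=\int_{0}^{1/2}\left(t-\tfrac16\right)f^{\prime }(a+te^{i\varphi }(b-a))\,dt
+\int_{1/2}^{1}\left(t-\tfrac56\right)f^{\prime }(a+te^{i\varphi }(b-a))\,dt,
\]
treating $e^{i\varphi }(b-a)$ as a fixed nonzero constant and using $\frac{d}{dt}f(a+te^{i\varphi }(b-a))=e^{i\varphi }(b-a)\,f^{\prime }(a+te^{i\varphi }(b-a))$. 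The differentiability of $f$ on $K^{\circ }$ and the integrability of $f^{\prime }$ on $[a,a+e^{i\varphi }(b-a)]$ justify applying integration by parts on each piece with $u=p(t)$ (so $p^{\prime }(t)=1$ on each piece) and $dv=f^{\prime }(a+te^{i\varphi }(b-a))\,dt$, i.e.\ $v=\frac{1}{e^{i\varphi }(b-a)}f(a+te^{i\varphi }(b-a))$. Each subintegral then splits into a boundary term and the term $-\frac{1}{e^{i\varphi }(b-a)}\int f(a+te^{i\varphi }(b-a))\,dt$ over that subinterval.

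Next I would collect the boundary terms. Evaluating $p$ at the endpoints gives $p(0)=-\frac16$, $p\!\left(\frac12^{-}\right)=\frac13$, $p\!\left(\frac12\right)=-\frac13$, $p(1)=\frac16$; substituting these and summing, the value of $f$ at the midpoint $\frac{2a+e^{i\varphi }(b-a)}{2}=a+\frac12 e^{i\varphi }(b-a)$ picks up total weight $\frac13+\frac13=\frac23=\frac46$, while $f(a)$ and $f(a+e^{i\varphi }(b-a))$ each get weight $\frac16$, so the boundary contribution equals
\[
\frac{1}{e^{i\varphi }(b-a)}\cdot \frac16\left[f(a)+4f\!\left(\frac{2a+e^{i\varphi }(b-a)}{2}\right)+f(a+e^{i\varphi }(b-a))\right].
\]
The two remaining integral pieces recombine into $-\frac{1}{e^{i\varphi }(b-a)}\int_{0}^{1}f(a+te^{i\varphi }(b-a))\,dt$, and the change of variable $x=a+te^{i\varphi }(b-a)$, $dx=e^{i\varphi }(b-a)\,dt$, turns this into $-\frac{1}{(e^{i\varphi }(b-a))^{2}}\int_{a}^{a+e^{i\varphi }(b-a)}f(x)\,dx$. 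Multiplying $I$ through by $e^{i\varphi }(b-a)$ clears exactly one factor of $e^{i\varphi }(b-a)$ from every term and produces the claimed right-hand side.

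The computation is entirely routine; the only points that need care are bookkeeping the constant factor $e^{i\varphi }(b-a)$ consistently through the integration by parts and the substitution (so that precisely one power survives in the final identity), and getting the four one-sided evaluations of $p$ at $0,\frac12^{-},\frac12,1$ correct so that the midpoint term ends up with the coefficient $4$. I do not anticipate a genuine obstacle beyond this arithmetic bookkeeping.
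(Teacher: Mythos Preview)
Your proposal is correct and follows essentially the same approach as the paper: split the integral at $t=\tfrac12$, integrate by parts on each piece with $u=p(t)$ and $v=\frac{1}{e^{i\varphi}(b-a)}f(a+te^{i\varphi}(b-a))$, collect the boundary contributions to obtain the Simpson combination, and then apply the substitution $x=a+te^{i\varphi}(b-a)$ before multiplying through by $e^{i\varphi}(b-a)$. Your bookkeeping of the boundary values and the powers of $e^{i\varphi}(b-a)$ is accurate and matches the paper's computation.
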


\begin{proof}
Since $K$ is a $\varphi -$convex set, for $a,b\in K$ and $t\in \lbrack 0,1]$
we have $a+e^{i\varphi }(b-a)\in K.$ Integrating by parts implies that%
\begin{eqnarray*}
&&\int_{0}^{\frac{1}{2}}\left( t-\frac{1}{6}\right) f^{\prime
}(a+te^{i\varphi }(b-a))dt+\int_{\frac{1}{2}}^{1}\left( t-\frac{5}{6}\right)
f^{\prime }(a+te^{i\varphi }(b-a))dt \\
&=&\left. \left( t-\frac{1}{6}\right) \frac{f(a+te^{i\varphi }(b-a))}{%
e^{i\varphi }(b-a)}\right\vert _{0}^{\frac{1}{2}}-\int_{0}^{\frac{1}{2}}%
\frac{f(a+te^{i\varphi }(b-a))}{e^{i\varphi }(b-a)}dt \\
&&+\left. \left( t-\frac{5}{6}\right) \frac{f(a+te^{i\varphi }(b-a))}{%
e^{i\varphi }(b-a)}\right\vert _{\frac{1}{2}}^{1}-\int_{\frac{1}{2}}^{1}%
\frac{f(a+te^{i\varphi }(b-a))}{e^{i\varphi }(b-a)}dt \\
&=&\frac{1}{6e^{i\varphi }(b-a)}\left[ f(a)+4f\left( \frac{2a+e^{i\varphi
}(b-a)}{2}\right) +f(a+e^{i\varphi }(b-a))\right] \\
&&-\frac{1}{e^{i\varphi }(b-a)}\left[ \int_{0}^{\frac{1}{2}}f(a+te^{i\varphi
}(b-a))dt+\int_{\frac{1}{2}}^{1}f(a+te^{i\varphi }(b-a))dt\right] .
\end{eqnarray*}%
If we change the variable $x=a+te^{i\varphi }(b-a)$ and multiply the
resulting equality with $e^{i\varphi }(b-a)$ we get the desired result.
\end{proof}

\begin{theorem}
\label{teo 3.1} Let $f:K\rightarrow (0,\infty )$ be a differentiable
function on $K^{\circ }.$ If $\left\vert f^{\prime }\right\vert $ is $%
\varphi -$convex function on $K^{\circ }$ and $a,b\in K$ with $%
a<a+e^{i\varphi }(b-a).$ Then , the following inequality holds:%
\begin{eqnarray*}
&&\left\vert \frac{1}{6}\left[ f(a)+4f\left( \frac{2a+e^{i\varphi }(b-a)}{2}%
\right) +f(a+e^{i\varphi }(b-a))\right] -\frac{1}{e^{i\varphi }(b-a)}%
\int_{a}^{a+e^{i\varphi }(b-a)}f(x)dx\right\vert \\
&\leq &\frac{5}{72}e^{i\varphi }(b-a)\left[ \left\vert f^{\prime
}(a)\right\vert +\left\vert f^{\prime }(b)\right\vert \right] .
\end{eqnarray*}
\end{theorem}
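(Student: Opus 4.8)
The plan is to start from the integral identity in Lemma~\ref{lem 3.1}, take absolute values, and bound the right-hand side by
\begin{equation*}
\left\vert e^{i\varphi }(b-a)\right\vert \int_{0}^{1}\left\vert p(t)\right\vert \left\vert f^{\prime }(a+te^{i\varphi }(b-a))\right\vert dt.
\end{equation*}
Since $\left\vert f^{\prime }\right\vert $ is $\varphi -$convex on $K^{\circ }$ and the point $a+te^{i\varphi }(b-a)$ is exactly of the form appearing in Definition~\ref{def 2} with $u=a$, $v=b$, we may substitute
\begin{equation*}
\left\vert f^{\prime }(a+te^{i\varphi }(b-a))\right\vert \leq (1-t)\left\vert f^{\prime }(a)\right\vert +t\left\vert f^{\prime }(b)\right\vert .
\end{equation*}
This reduces the problem to evaluating the elementary weighted integrals $\int_{0}^{1}\left\vert p(t)\right\vert (1-t)\,dt$ and $\int_{0}^{1}\left\vert p(t)\right\vert t\,dt$.

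The next step is to compute these two integrals using the piecewise definition of $p(t)$. On $\left[0,\frac{1}{2}\right)$ we have $p(t)=t-\frac{1}{6}$, so $\left\vert p(t)\right\vert$ splits further at $t=\frac{1}{6}$; on $\left[\frac{1}{2},1\right]$ we have $p(t)=t-\frac{5}{6}$, so $\left\vert p(t)\right\vert$ splits at $t=\frac{5}{6}$. By symmetry of $p$ about $t=\frac{1}{2}$ (more precisely $\left\vert p(1-t)\right\vert=\left\vert p(t)\right\vert$), the two integrals $\int_{0}^{1}\left\vert p(t)\right\vert (1-t)\,dt$ and $\int_{0}^{1}\left\vert p(t)\right\vert t\,dt$ are equal, and their common value is $\frac{1}{2}\int_{0}^{1}\left\vert p(t)\right\vert\,dt$. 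One then checks that $\int_{0}^{1}\left\vert p(t)\right\vert\,dt=\frac{5}{36}$, giving $\frac{5}{72}$ for each weighted integral, which produces the stated constant $\frac{5}{72}$ multiplying $\left[\left\vert f^{\prime }(a)\right\vert+\left\vert f^{\prime }(b)\right\vert\right]$.

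I expect no serious conceptual obstacle here; the argument is a direct application of the $\varphi -$convexity bound inside the Lemma~\ref{lem 3.1} identity. The one place that demands care is the piecewise arithmetic for $\int_{0}^{1}\left\vert p(t)\right\vert\,dt$: one must correctly split each subinterval at the sign change of $p$ (at $t=\frac{1}{6}$ and at $t=\frac{5}{6}$) and sum four small definite integrals, and it is here that an error in the constant would most easily creep in. I would also note in passing that the factor $\left\vert e^{i\varphi }(b-a)\right\vert$ is written simply as $e^{i\varphi }(b-a)$ throughout the paper's convention, so no extra modulus bookkeeping is needed to match the stated form of the inequality.
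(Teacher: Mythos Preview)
Your proposal is correct and follows essentially the same route as the paper: invoke Lemma~\ref{lem 3.1}, apply the $\varphi$-convexity bound $\left\vert f'(a+te^{i\varphi}(b-a))\right\vert\le(1-t)\left\vert f'(a)\right\vert+t\left\vert f'(b)\right\vert$, and evaluate the resulting integrals by splitting at $t=\tfrac{1}{6}$ and $t=\tfrac{5}{6}$. Your use of the symmetry $\left\vert p(1-t)\right\vert=\left\vert p(t)\right\vert$ to reduce both weighted integrals to $\tfrac{1}{2}\int_{0}^{1}\left\vert p(t)\right\vert\,dt=\tfrac{5}{72}$ is a neat shortcut that the paper does not exploit (it computes the four pieces with their $(1-t)$ and $t$ weights directly), but the underlying argument is the same.
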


\begin{proof}
From Lemma \ref{lem 3.1} and using the $\varphi -$convexity of $\left\vert
f^{\prime }\right\vert $ we have%
\begin{eqnarray*}
&&\left\vert \frac{1}{6}\left[ f(a)+4f\left( \frac{2a+e^{i\varphi }(b-a)}{2}%
\right) +f(a+e^{i\varphi }(b-a))\right] -\frac{1}{e^{i\varphi }(b-a)}%
\int_{a}^{a+e^{i\varphi }(b-a)}f(x)dx\right\vert \\
&\leq &e^{i\varphi }(b-a)\left\{ \int_{0}^{\frac{1}{2}}\left\vert t-\frac{1}{%
6}\right\vert \left\vert f^{\prime }(a+te^{i\varphi }(b-a))\right\vert
dt+\int_{\frac{1}{2}}^{1}\left\vert t-\frac{5}{6}\right\vert \left\vert
f^{\prime }(a+te^{i\varphi }(b-a))\right\vert dt\right\} \\
&\leq &e^{i\varphi }(b-a)\left\{ \int_{0}^{\frac{1}{6}}\left( \frac{1}{6}%
-t\right) \left[ \left( 1-t\right) \left\vert f^{\prime }(a)\right\vert
+t\left\vert f^{\prime }(b)\right\vert \right] dt\right. \\
&&+\left. \int_{\frac{1}{6}}^{\frac{1}{2}}\left( t-\frac{1}{6}\right) \left[
\left( 1-t\right) \left\vert f^{\prime }(a)\right\vert +t\left\vert
f^{\prime }(b)\right\vert \right] dt\right. \\
&&+\left. \int_{\frac{1}{2}}^{\frac{5}{6}}\left( \frac{5}{6}-t\right) \left[
\left( 1-t\right) \left\vert f^{\prime }(a)\right\vert +t\left\vert
f^{\prime }(b)\right\vert \right] dt\right. \\
&&+\left. \int_{\frac{5}{6}}^{1}\left( t-\frac{5}{6}\right) \left[ \left(
1-t\right) \left\vert f^{\prime }(a)\right\vert +t\left\vert f^{\prime
}(b)\right\vert \right] dt\right. \\
&=&\frac{5}{72}e^{i\varphi }(b-a)\left[ \left\vert f^{\prime }(a)\right\vert
+\left\vert f^{\prime }(b)\right\vert \right]
\end{eqnarray*}%
which completes the proof.
\end{proof}

\begin{theorem}
\label{teo 3.2} Let $f:K\rightarrow (0,\infty )$ be a differentiable
function on $K^{\circ },$ $a,b\in K$ with $a<a+e^{i\varphi }(b-a).$ If $%
\left\vert f^{\prime }\right\vert ^{q}$ is $\varphi -$convex function on $%
K^{\circ }$ for some fixed $q>1$ then the following inequality holds%
\begin{eqnarray*}
&&\left\vert \frac{1}{6}\left[ f(a)+4f\left( \frac{2a+e^{i\varphi }(b-a)}{2}%
\right) +f(a+e^{i\varphi }(b-a))\right] -\frac{1}{e^{i\varphi }(b-a)}%
\int_{a}^{a+e^{i\varphi }(b-a)}f(x)dx\right\vert \\
&\leq &e^{i\varphi }(b-a)\left( \frac{1+2^{p+1}}{6^{p+1}(p+1)}\right) ^{%
\frac{1}{p}} \\
&&\times \left\{ \left( \frac{3}{8}\left\vert f^{\prime }(a)\right\vert ^{q}+%
\frac{1}{8}\left\vert f^{\prime }(b)\right\vert ^{q}\right) ^{\frac{1}{q}%
}+\left( \frac{1}{8}\left\vert f^{\prime }(a)\right\vert ^{q}+\frac{3}{8}%
\left\vert f^{\prime }(b)\right\vert ^{q}\right) ^{\frac{1}{q}}\right\}
\end{eqnarray*}%
where $p=\frac{q}{q-1}.$
\end{theorem}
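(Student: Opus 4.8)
The plan is to start from Lemma~\ref{lem 3.1}, which expresses the left-hand side as $e^{i\varphi}(b-a)\int_0^1 p(t)f'(a+te^{i\varphi}(b-a))\,dt$, and then apply the H\"older inequality of Theorem~\ref{teo 1.1} to each of the two pieces of the integral determined by the breakpoint $t=1/2$. Concretely, I would write
\begin{equation*}
\left|\int_0^1 p(t)f'(a+te^{i\varphi}(b-a))\,dt\right|
\leq \int_0^{1/2}\left|t-\tfrac16\right|\,|f'|\,dt+\int_{1/2}^1\left|t-\tfrac56\right|\,|f'|\,dt,
\end{equation*}
and on each term separate $|p(t)|$ from $|f'(a+te^{i\varphi}(b-a))|$ using exponents $p$ and $q$ with $1/p+1/q=1$.

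The second step is to evaluate the two $p$-power weight integrals. By the symmetry $t\mapsto 1-t$ one checks that $\int_0^{1/2}|t-\tfrac16|^p\,dt=\int_{1/2}^1|t-\tfrac56|^p\,dt$, so both equal a single constant; splitting $\int_0^{1/2}|t-\tfrac16|^p\,dt$ at $t=1/6$ into $\int_0^{1/6}(\tfrac16-t)^p\,dt+\int_{1/6}^{1/2}(t-\tfrac16)^p\,dt$ and integrating gives $\tfrac{1}{(p+1)}\left[(\tfrac16)^{p+1}+(\tfrac13)^{p+1}\right]=\tfrac{1+2^{p+1}}{6^{p+1}(p+1)}$, which is exactly the factor appearing in the statement. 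This is the only genuinely computational part and I expect it to be the main (though still routine) obstacle; the bookkeeping with the two breakpoints $1/6$ and $5/6$ is where a sign slip is most likely.

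The third step is to bound the two $q$-power integrals of $|f'|^q$ using $\varphi$-convexity: on $[0,1/2]$,
\begin{equation*}
\int_0^{1/2}|f'(a+te^{i\varphi}(b-a))|^q\,dt\leq \int_0^{1/2}\bigl[(1-t)|f'(a)|^q+t|f'(b)|^q\bigr]\,dt=\tfrac38|f'(a)|^q+\tfrac18|f'(b)|^q,
\end{equation*}
and symmetrically $\int_{1/2}^1|f'(a+te^{i\varphi}(b-a))|^q\,dt\leq \tfrac18|f'(a)|^q+\tfrac38|f'(b)|^q$, since $\int_0^{1/2}(1-t)\,dt=\tfrac38$, $\int_0^{1/2}t\,dt=\tfrac18$, and the roles reverse on $[1/2,1]$. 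Finally I would combine: each of the two H\"older terms contributes the common factor $\left(\tfrac{1+2^{p+1}}{6^{p+1}(p+1)}\right)^{1/p}$ times the corresponding $\left(\cdots\right)^{1/q}$, multiply through by $e^{i\varphi}(b-a)$ from the lemma, and read off exactly the claimed inequality.
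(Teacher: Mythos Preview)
Your proposal is correct and follows essentially the same route as the paper: Lemma~\ref{lem 3.1}, then H\"older on each of $[0,\tfrac12]$ and $[\tfrac12,1]$, the computation $\int_0^{1/2}|t-\tfrac16|^p\,dt=\int_{1/2}^1|t-\tfrac56|^p\,dt=\tfrac{1+2^{p+1}}{6^{p+1}(p+1)}$, and the $\varphi$-convexity bound yielding the $\tfrac38,\tfrac18$ coefficients. Nothing is missing or different.
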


\begin{proof}
From Lemma \ref{lem 3.1} and using the H\"{o}lder inequality, we have 
\begin{eqnarray*}
&&\left\vert \frac{1}{6}\left[ f(a)+4f\left( \frac{2a+e^{i\varphi }(b-a)}{2}%
\right) +f(a+e^{i\varphi }(b-a))\right] -\frac{1}{e^{i\varphi }(b-a)}%
\int_{a}^{a+e^{i\varphi }(b-a)}f(x)dx\right\vert \\
&\leq &e^{i\varphi }(b-a)\left\{ \left( \int_{0}^{\frac{1}{2}}\left\vert t-%
\frac{1}{6}\right\vert ^{p}dt\right) ^{\frac{1}{p}}\left( \int_{0}^{\frac{1}{%
2}}\left\vert f^{\prime }(a+te^{i\varphi }(b-a))\right\vert ^{q}dt\right) ^{%
\frac{1}{q}}\right. \\
&&+\left. \left( \int_{\frac{1}{2}}^{1}\left\vert t-\frac{5}{6}\right\vert
^{p}dt\right) ^{\frac{1}{p}}\left( \int_{\frac{1}{2}}^{1}\left\vert
f^{\prime }(a+te^{i\varphi }(b-a))\right\vert ^{q}dt\right) ^{\frac{1}{q}%
}\right\} .
\end{eqnarray*}%
Since $\left\vert f^{\prime }\right\vert ^{q}$ is $\varphi -$convex, we
obtain 
\begin{eqnarray*}
&&\left\vert \frac{1}{6}\left[ f(a)+4f\left( \frac{2a+e^{i\varphi }(b-a)}{2}%
\right) +f(a+e^{i\varphi }(b-a))\right] -\frac{1}{e^{i\varphi }(b-a)}%
\int_{a}^{a+e^{i\varphi }(b-a)}f(x)dx\right\vert \\
&\leq &e^{i\varphi }(b-a)\left\{ \left( \int_{0}^{\frac{1}{6}}\left( \frac{1%
}{6}-t\right) ^{p}dt+\int_{\frac{1}{6}}^{\frac{1}{2}}\left( t-\frac{1}{6}%
\right) ^{p}dt\right) ^{\frac{1}{p}}\right. \\
&&\times \left. \left( \int_{0}^{\frac{1}{2}}\left[ \left( 1-t\right)
\left\vert f^{\prime }(a)\right\vert ^{q}+t\left\vert f^{\prime
}(b)\right\vert ^{q}\right] dt\right) ^{\frac{1}{q}}\right. \\
&&+\left. \left( \int_{\frac{1}{2}}^{\frac{5}{6}}\left( \frac{5}{6}-t\right)
^{p}dt+\int_{\frac{5}{6}}^{1}\left( t-\frac{5}{6}\right) ^{p}dt\right) ^{%
\frac{1}{p}}\right. \\
&&\times \left. \left( \int_{\frac{1}{2}}^{1}\left[ \left( 1-t\right)
\left\vert f^{\prime }(a)\right\vert ^{q}+t\left\vert f^{\prime
}(b)\right\vert ^{q}\right] dt\right) ^{\frac{1}{q}}\right\} \\
&=&e^{i\varphi }(b-a)\left( \frac{1+2^{p+1}}{6^{p+1}(p+1)}\right) ^{\frac{1}{%
p}} \\
&&\times \left\{ \left( \frac{3}{8}\left\vert f^{\prime }(a)\right\vert ^{q}+%
\frac{1}{8}\left\vert f^{\prime }(b)\right\vert ^{q}\right) ^{\frac{1}{q}%
}+\left( \frac{1}{8}\left\vert f^{\prime }(a)\right\vert ^{q}+\frac{3}{8}%
\left\vert f^{\prime }(b)\right\vert ^{q}\right) ^{\frac{1}{q}}\right\}
\end{eqnarray*}%
which is the desired.
\end{proof}

\begin{theorem}
\label{teo 3.3} Under the assumptions of Theorem \ref{teo 3.2}, we have the
following inequality%
\begin{eqnarray*}
&&\left\vert \frac{1}{6}\left[ f(a)+4f\left( \frac{2a+e^{i\varphi }(b-a)}{2}%
\right) +f(a+e^{i\varphi }(b-a))\right] -\frac{1}{e^{i\varphi }(b-a)}%
\int_{a}^{a+e^{i\varphi }(b-a)}f(x)dx\right\vert \\
&\leq &e^{i\varphi }(b-a)\left( \frac{2(1+2^{p+1})}{6^{p+1}(p+1)}\right) ^{%
\frac{1}{p}}\left[ \frac{\left\vert f^{\prime }(a)\right\vert
^{q}+\left\vert f^{\prime }(b)\right\vert ^{q}}{2}\right] ^{\frac{1}{q}}.
\end{eqnarray*}
\end{theorem}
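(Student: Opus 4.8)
The plan is to start exactly as in the proof of Theorem \ref{teo 3.2}: apply Lemma \ref{lem 3.1} and then H\"{o}lder's inequality to split the integral over $[0,1]$ at $t=\frac12$, obtaining two factors of the form $\left(\int |t-\tfrac16|^p\,dt\right)^{1/p}\left(\int |f'|^q\,dt\right)^{1/q}$ and $\left(\int |t-\tfrac56|^p\,dt\right)^{1/p}\left(\int |f'|^q\,dt\right)^{1/q}$. The first factors are purely numerical: on $[0,\tfrac12]$ one splits at $t=\tfrac16$ and on $[\tfrac12,1]$ at $t=\tfrac56$, each giving $\int_0^{1/2}|t-\tfrac16|^p\,dt=\int_{1/2}^{1}|t-\tfrac56|^p\,dt=\frac{1+2^{p+1}}{6^{p+1}(p+1)}$, so both power-factors equal $\left(\frac{1+2^{p+1}}{6^{p+1}(p+1)}\right)^{1/p}$.

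Next I would bound the two $|f'|^q$-integrals using $\varphi$-convexity of $|f'|^q$: over $[0,\tfrac12]$ we get $\int_0^{1/2}\big[(1-t)|f'(a)|^q+t|f'(b)|^q\big]dt=\tfrac38|f'(a)|^q+\tfrac18|f'(b)|^q$, and over $[\tfrac12,1]$ we get $\tfrac18|f'(a)|^q+\tfrac38|f'(b)|^q$. This reproduces the bound of Theorem \ref{teo 3.2}. The remaining (and only substantive) step is to pass from the sum $\left(\tfrac38 A+\tfrac18 B\right)^{1/q}+\left(\tfrac18 A+\tfrac38 B\right)^{1/q}$, where $A=|f'(a)|^q$, $B=|f'(b)|^q$, to the cleaner form $2^{1/p}\big(\tfrac{A+B}{2}\big)^{1/q}$. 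For this I would invoke the elementary concavity inequality $x^{1/q}+y^{1/q}\le 2^{1-1/q}(x+y)^{1/q}=2^{1/p}(x+y)^{1/q}$ (valid for $x,y\ge0$, $q>1$, since $t\mapsto t^{1/q}$ is concave), applied with $x=\tfrac38 A+\tfrac18 B$ and $y=\tfrac18 A+\tfrac38 B$, whence $x+y=\tfrac12(A+B)$. Substituting, the factor $\left(\frac{1+2^{p+1}}{6^{p+1}(p+1)}\right)^{1/p}\cdot 2^{1/p}=\left(\frac{2(1+2^{p+1})}{6^{p+1}(p+1)}\right)^{1/p}$ emerges, and $(x+y)^{1/q}=\big(\tfrac{A+B}{2}\big)^{1/q}=\big[\tfrac{|f'(a)|^q+|f'(b)|^q}{2}\big]^{1/q}$, giving precisely the claimed inequality.

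The only place requiring care — the ``main obstacle,'' such as it is — is justifying $x^{1/q}+y^{1/q}\le 2^{1/p}(x+y)^{1/q}$ cleanly; it follows immediately from Jensen's inequality (or concavity of $t\mapsto t^{1/q}$) applied to the two-point average $\tfrac12 x+\tfrac12 y$, i.e. $\tfrac12 x^{1/q}+\tfrac12 y^{1/q}\le\big(\tfrac12 x+\tfrac12 y\big)^{1/q}$, then multiplying by $2$ and noting $2\cdot 2^{-1/q}=2^{1/p}$. Alternatively one can derive it from the classical inequality $\sum a_k^r\le\big(\sum a_k\big)^r$ for $0<r\le1$ with $n=2$ and $r=1/q$, rescaled. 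Everything else is the verbatim computation of Theorem \ref{teo 3.2}, so I would present this proof very briefly, writing ``From the proof of Theorem \ref{teo 3.2} and using the inequality $x^{1/q}+y^{1/q}\le 2^{1/p}(x+y)^{1/q}$ for $x,y\ge0$, we obtain \dots'' and then displaying the one-line chain of inequalities ending in the stated bound.
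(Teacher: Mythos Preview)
Your argument is correct, but the route differs from the paper's. The paper applies H\"older's inequality \emph{once} over the whole interval $[0,1]$,
\[
\int_0^1 |p(t)|\,|f'(a+te^{i\varphi}(b-a))|\,dt \le \left(\int_0^1 |p(t)|^p\,dt\right)^{1/p}\left(\int_0^1 |f'(a+te^{i\varphi}(b-a))|^q\,dt\right)^{1/q},
\]
then evaluates $\int_0^1 |p(t)|^p\,dt=\dfrac{2(1+2^{p+1})}{6^{p+1}(p+1)}$ directly and bounds the second factor by $\varphi$-convexity to obtain $\dfrac{|f'(a)|^q+|f'(b)|^q}{2}$ in one step---no splitting at $t=\tfrac12$, no concavity trick. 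You instead reproduce Theorem~\ref{teo 3.2} first (H\"older on each half) and then collapse the two terms via $x^{1/q}+y^{1/q}\le 2^{1/p}(x+y)^{1/q}$. That is a perfectly valid alternative and has the pleasant by-product of exhibiting Theorem~\ref{teo 3.3} as a corollary of Theorem~\ref{teo 3.2} (hence the latter is the sharper of the two); the paper's version is simply more direct. One small slip: your parenthetical ``alternative'' inequality $\sum a_k^r\le(\sum a_k)^r$ for $0<r\le 1$ is written in the wrong direction (the true inequality is $\ge$), so that alternative route does not work as stated; your primary Jensen/concavity justification is correct and suffices.
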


\begin{proof}
From Lemma \ref{lem 3.1}, $\varphi -$convexity of $\left\vert f^{\prime
}\right\vert ^{q}$ and using the H\"{o}lder inequality, we have 
\begin{eqnarray*}
&&\left\vert \frac{1}{6}\left[ f(a)+4f\left( \frac{2a+e^{i\varphi }(b-a)}{2}%
\right) +f(a+e^{i\varphi }(b-a))\right] -\frac{1}{e^{i\varphi }(b-a)}%
\int_{a}^{a+e^{i\varphi }(b-a)}f(x)dx\right\vert \\
&\leq &e^{i\varphi }(b-a)\left[ \int_{0}^{1}\left\vert p(t)\right\vert
\left\vert f^{\prime }(a+te^{i\varphi }(b-a))\right\vert dt\right] \\
&\leq &e^{i\varphi }(b-a)\left( \int_{0}^{1}\left\vert p(t)\right\vert
^{p}dt\right) ^{\frac{1}{p}}\left( \int_{0}^{1}\left\vert f^{\prime
}(a+te^{i\varphi }(b-a))\right\vert ^{q}dt\right) ^{\frac{1}{q}} \\
&\leq &e^{i\varphi }(b-a)\left( \int_{0}^{\frac{1}{2}}\left\vert t-\frac{1}{6%
}\right\vert ^{p}dt+\int_{\frac{1}{2}}^{1}\left\vert t-\frac{5}{6}%
\right\vert ^{p}dt\right) ^{\frac{1}{p}}\left( \int_{0}^{1}\left[ \left(
1-t\right) \left\vert f^{\prime }(a)\right\vert ^{q}+t\left\vert f^{\prime
}(b)\right\vert ^{q}\right] dt\right) ^{\frac{1}{q}} \\
&=&e^{i\varphi }(b-a)\left( \frac{2(1+2^{p+1})}{6^{p+1}(p+1)}\right) ^{\frac{%
1}{p}}\left[ \frac{\left\vert f^{\prime }(a)\right\vert ^{q}+\left\vert
f^{\prime }(b)\right\vert ^{q}}{2}\right] ^{\frac{1}{q}}
\end{eqnarray*}%
where we used the fact that%
\begin{equation*}
\int_{0}^{\frac{1}{2}}\left\vert t-\frac{1}{6}\right\vert ^{p}dt=\int_{\frac{%
1}{2}}^{1}\left\vert t-\frac{5}{6}\right\vert ^{p}dt=\frac{(1+2^{p+1})}{%
6^{p+1}(p+1)}.
\end{equation*}%
The proof is completed.
\end{proof}

\begin{theorem}
\label{teo 3.4} Let $f:K\rightarrow (0,\infty )$ be a differentiable
function on $K^{\circ },$ $a,b\in K$ with $a<a+e^{i\varphi }(b-a).$ If $%
\left\vert f^{\prime }\right\vert ^{q}$ is $\varphi -$convex function on $%
K^{\circ }$ for some fixed $q\geq 1$ then the following inequality holds%
\begin{eqnarray*}
&&\left\vert \frac{1}{6}\left[ f(a)+4f\left( \frac{2a+e^{i\varphi }(b-a)}{2}%
\right) +f(a+e^{i\varphi }(b-a))\right] -\frac{1}{e^{i\varphi }(b-a)}%
\int_{a}^{a+e^{i\varphi }(b-a)}f(x)dx\right\vert \\
&\leq &e^{i\varphi }(b-a)\left( \frac{5}{72}\right) ^{1-\frac{1}{q}} \\
&&\times \left\{ \left( \frac{61\left\vert f^{\prime }(a)\right\vert
^{q}+29\left\vert f^{\prime }(b)\right\vert ^{q}}{1296}\right) ^{\frac{1}{q}%
}+\left( \frac{29\left\vert f^{\prime }(a)\right\vert ^{q}+61\left\vert
f^{\prime }(b)\right\vert ^{q}}{1296}\right) ^{\frac{1}{q}}\right\} .
\end{eqnarray*}
\end{theorem}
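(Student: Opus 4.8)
The plan is to proceed exactly as in the previous three theorems: start from the identity in Lemma \ref{lem 3.1}, pass to absolute values, and then estimate the single integral $\int_{0}^{1}|p(t)|\,|f^{\prime }(a+te^{i\varphi }(b-a))|\,dt$. The difference with Theorems \ref{teo 3.1}--\ref{teo 3.3} is that, since here $q\geq 1$ (not $q>1$), I would split the integral over the two intervals $[0,\frac{1}{2}]$ and $[\frac{1}{2},1]$ on which $p$ has a simple form, and on each piece apply the \emph{power-mean} inequality $\int |p|\,|f^{\prime }| \leq \big(\int |p|\big)^{1-\frac{1}{q}}\big(\int |p|\,|f^{\prime }|^{q}\big)^{\frac{1}{q}}$ instead of H\"older. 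Using $\int_{0}^{1/2}|t-\frac{1}{6}|\,dt=\int_{1/2}^{1}|t-\frac{5}{6}|\,dt=\frac{5}{72}$ (a computation already implicit in the proof of Theorem \ref{teo 3.1}) produces the factor $\big(\frac{5}{72}\big)^{1-\frac{1}{q}}$ in front of each of the two resulting terms.

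Next I would use the $\varphi $-convexity of $|f^{\prime }|^{q}$, i.e.\ $|f^{\prime }(a+te^{i\varphi }(b-a))|^{q}\leq (1-t)|f^{\prime }(a)|^{q}+t|f^{\prime }(b)|^{q}$, reducing the work to the four elementary integrals $\int_{0}^{1/2}|t-\frac{1}{6}|(1-t)\,dt$, $\int_{0}^{1/2}|t-\frac{1}{6}|\,t\,dt$, $\int_{1/2}^{1}|t-\frac{5}{6}|(1-t)\,dt$, $\int_{1/2}^{1}|t-\frac{5}{6}|\,t\,dt$. Each is evaluated by splitting the interval at the point where the linear factor inside the absolute value changes sign ($t=\frac{1}{6}$ on the first interval, $t=\frac{5}{6}$ on the second). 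I expect $\int_{0}^{1/2}|t-\frac{1}{6}|(1-t)\,dt=\frac{61}{1296}$ and $\int_{0}^{1/2}|t-\frac{1}{6}|\,t\,dt=\frac{29}{1296}$, and by the substitution $t\mapsto 1-t$ the two integrals over $[\frac{1}{2},1]$ equal these with the roles of $a$ and $b$ interchanged, i.e.\ $\frac{29}{1296}$ and $\frac{61}{1296}$. A convenient sanity check is that the two integrals on each subinterval must add up to $\frac{5}{72}=\frac{90}{1296}$.

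Assembling the pieces: the $[0,\frac{1}{2}]$ contribution is at most $\big(\frac{5}{72}\big)^{1-\frac{1}{q}}\big(\frac{61|f^{\prime }(a)|^{q}+29|f^{\prime }(b)|^{q}}{1296}\big)^{\frac{1}{q}}$, the $[\frac{1}{2},1]$ contribution at most $\big(\frac{5}{72}\big)^{1-\frac{1}{q}}\big(\frac{29|f^{\prime }(a)|^{q}+61|f^{\prime }(b)|^{q}}{1296}\big)^{\frac{1}{q}}$; adding them and multiplying by $e^{i\varphi }(b-a)$ gives the claimed inequality. There is no genuine obstacle here — the architecture is identical to the earlier theorems — the only thing requiring care is the bookkeeping of which coefficient ($61$ or $29$) attaches to $|f^{\prime }(a)|^{q}$ versus $|f^{\prime }(b)|^{q}$ on each subinterval, which is precisely why making the symmetry $t\mapsto 1-t$ explicit is worthwhile.
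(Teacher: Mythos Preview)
Your proposal is correct and follows essentially the same route as the paper: Lemma \ref{lem 3.1}, split the integral at $t=\tfrac{1}{2}$, apply the power-mean inequality on each piece to get the factor $\big(\tfrac{5}{72}\big)^{1-\frac{1}{q}}$, then use the $\varphi$-convexity of $|f^{\prime}|^{q}$ and evaluate the weighted integrals (splitting at $t=\tfrac{1}{6}$ and $t=\tfrac{5}{6}$) to obtain $\tfrac{61}{1296}$ and $\tfrac{29}{1296}$. Your explicit use of the symmetry $t\mapsto 1-t$ and the sanity check $61+29=90=\tfrac{5}{72}\cdot 1296$ are nice touches that the paper leaves implicit.
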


\begin{proof}
From Lemma \ref{lem 3.1} and using the power-mean inequality, we have%
\begin{eqnarray*}
&&\left\vert \frac{1}{6}\left[ f(a)+4f\left( \frac{2a+e^{i\varphi }(b-a)}{2}%
\right) +f(a+e^{i\varphi }(b-a))\right] -\frac{1}{e^{i\varphi }(b-a)}%
\int_{a}^{a+e^{i\varphi }(b-a)}f(x)dx\right\vert \\
&\leq &e^{i\varphi }(b-a) \\
&&\times \left\{ \left( \int_{0}^{\frac{1}{2}}\left\vert t-\frac{1}{6}%
\right\vert ^{1-\frac{1}{q}}dt\right) ^{\frac{1}{q}}\left( \int_{0}^{\frac{1%
}{2}}\left\vert t-\frac{1}{6}\right\vert \left\vert f^{\prime
}(a+te^{i\varphi }(b-a))\right\vert ^{q}dt\right) ^{\frac{1}{q}}\right. \\
&&+\left. \left( \int_{\frac{1}{2}}^{1}\left\vert t-\frac{5}{6}\right\vert
^{1-\frac{1}{q}}dt\right) ^{\frac{1}{q}}\left( \int_{\frac{1}{2}%
}^{1}\left\vert t-\frac{5}{6}\right\vert \left\vert f^{\prime
}(a+te^{i\varphi }(b-a))\right\vert ^{q}dt\right) ^{\frac{1}{q}}\right\} .
\end{eqnarray*}%
Since $\left\vert f^{\prime }\right\vert ^{q}$ is $\varphi -$convex function
we have%
\begin{eqnarray*}
&&\int_{0}^{\frac{1}{2}}\left\vert t-\frac{1}{6}\right\vert \left\vert
f^{\prime }(a+te^{i\varphi }(b-a))\right\vert ^{q}dt \\
&\leq &\int_{0}^{\frac{1}{6}}\left( \frac{1}{6}-t\right) \left[ \left(
1-t\right) \left\vert f^{\prime }(a)\right\vert ^{q}+t\left\vert f^{\prime
}(b)\right\vert ^{q}\right] dt \\
&&+\int_{\frac{1}{6}}^{\frac{1}{2}}\left( t-\frac{1}{6}\right) \left[ \left(
1-t\right) \left\vert f^{\prime }(a)\right\vert ^{q}+t\left\vert f^{\prime
}(b)\right\vert ^{q}\right] dt \\
&=&\frac{61\left\vert f^{\prime }(a)\right\vert ^{q}+29\left\vert f^{\prime
}(b)\right\vert ^{q}}{1296}
\end{eqnarray*}%
and 
\begin{eqnarray*}
&&\int_{\frac{1}{2}}^{1}\left\vert t-\frac{5}{6}\right\vert \left\vert
f^{\prime }(a+te^{i\varphi }(b-a))\right\vert ^{q}dt \\
&\leq &\int_{\frac{1}{2}}^{\frac{5}{6}}\left( \frac{5}{6}-t\right) \left[
\left( 1-t\right) \left\vert f^{\prime }(a)\right\vert ^{q}+t\left\vert
f^{\prime }(b)\right\vert ^{q}\right] dt \\
&&+\int_{\frac{5}{6}}^{1}\left( t-\frac{5}{6}\right) \left[ \left(
1-t\right) \left\vert f^{\prime }(a)\right\vert ^{q}+t\left\vert f^{\prime
}(b)\right\vert ^{q}\right] dt \\
&=&\frac{29\left\vert f^{\prime }(a)\right\vert ^{q}+61\left\vert f^{\prime
}(b)\right\vert ^{q}}{1296}.
\end{eqnarray*}%
Combining all the above inequalities gives us the desired result.$\ \ \ \ \
\ \ \ \ \ \ \ \ \ \ \ \ \ \ \ \ \ \ \ \ \ \ \ \ \ \ \ \ \ \ \ \ \ \ \ \ \ \
\ \ \ \ \ \ \ \ \ \ \ \ \ \ \ \ \ \ \ \ \ \ \ \ \ \ \ \ \ \ \ \ \ \ \ \ \ \
\ \ \ \ $
\end{proof}

\end{document}